\newtheorem{theorem}{Theorem}[section]
\newtheorem{lemma}[theorem]{Lemma}
\theoremstyle{definition}
\newtheorem{remark}[theorem]{Remark}
\numberwithin{equation}{section}
\newcommand{\bZ}{\mathbb{Z}}
\newcommand{\supp}{\operatorname{supp}}
\newcommand{\dif}{\,\mathrm{d}}
\newcommand{\tP}{\widetilde{P}}
\newcommand{\tN}{\widetilde{N}}
\newcommand{\td}{\widetilde{d}}
\newcommand{\charfun}{\mathbb 1}
\newcommand{\indset}{\mathfrak i}
\begin{document}

\title{Orthogonal projectors onto spaces of periodic splines}

\author[M. Passenbrunner]{Markus Passenbrunner}
\address{Institute of Analysis, Johannes Kepler University Linz, Austria, 4040 Linz, Altenberger Strasse 69}
\email{markus.passenbrunner@jku.at}
\thanks{The author is supported by the FWF, project number P27723}

\subjclass[2010]{40A05, 41A15, 46E30}
\keywords{Periodic splines, Almost everywhere convergence}
\date{\today}
\begin{abstract}
	The main result of this paper is a proof that for any integrable function $f$ on the torus, 
	any sequence of its orthogonal projections $(\tP_n f)$ onto periodic spline spaces with arbitrary knots
	$\widetilde \Delta_n$ and arbitrary polynomial degree converges  to $f$
	almost everywhere with respect to the Lebesgue measure, provided the
	mesh diameter $|\widetilde\Delta_n|$ tends to zero. We also give a proof of the fact that the operators
	$\tP_n$ are bounded on $L^\infty$ independently of the knots $\widetilde\Delta_n$.
\end{abstract}
\maketitle
\section{Introduction}
\subsection{Splines on an interval}
In this article we prove some results about the periodic spline orthoprojector. In order to achieve this, we rely
on existing results for the non-periodic
spline orthoprojector on a compact interval, so
we first describe some of those results for the latter operator.
Let $k\in \mathbb N$ and $\Delta=(t_i)_{i=\ell}^{r+k}$ 
a knot sequence satisfying
\begin{align*}
	t_i &\leq t_{i+1}, \qquad t_i<t_{i+k}, \\
	t_{\ell}&= \cdots = t_{\ell+k-1} , \qquad t_{r+1} = \cdots = t_{r+k}.
\end{align*}
 Associated to this knot sequence, we define $(N_i)_{i=\ell}^r$ as the sequence of $L^\infty$-normalized B-spline
 functions of order $k$ on  
 $\Delta$ that have the properties
\begin{equation*}
	\supp N_i = [t_i,t_{i+k}],\qquad N_i\geq 0,\qquad \sum_{i=\ell}^r N_i\equiv 1.
\end{equation*}
We write $|\Delta|=\max_{\ell\leq j\leq r}(t_{j+1} - t_j)$ for the maximal mesh width of the partition~$\Delta$.
Then, define the space $\mathcal S_k(\Delta)$ as the set of polynomial splines of order $k$ (or at most degree $k-1$)
with knots $\Delta$, which is the linear span of
the B-spline functions $(N_i)_{i=\ell}^r$. Moreover, let $P_\Delta$ be the orthogonal projection operator onto the
space $\mathcal S_k(\Delta)$ with respect to the ordinary (real) inner product $\langle f,g\rangle =
\int_{t_\ell}^{t_{r+1}}
f(x)g(x)\dif x$, i.e.,
\begin{equation*}
	\langle P_\Delta f,s\rangle = \langle f,s\rangle\qquad \text{for all }s\in\mathcal S_k(\Delta).
\end{equation*}
The operator $P_\Delta$ is also given by the formula
\begin{equation}\label{eq:projBsplines}
	P_\Delta f = \sum_{i=\ell}^r \langle f,N_i\rangle N_i^*,
\end{equation}
where $(N_i^*)_{i=\ell}^r$ denotes the dual basis to $(N_i)$ defined by the relations $\langle N_i^*, N_j\rangle =0$
when $j\neq i$ and $\langle N_i^*, N_i \rangle=1$ for all $i=\ell,\ldots,r$.
A famous theorem by A. Shadrin states that the $L^\infty$ norm of this projection operator is bounded independently
of the knot sequence~$\Delta$:
\begin{theorem}[\cite{Shadrin2001}]\label{thm:shadrin}
	There exists a constant $c_k$ depending only on the spline order $k$ such that for all
knot sequences $\Delta=(t_i)_{i=\ell}^{r+k}$ as above,
\begin{equation*}
	\|P_\Delta : L^\infty[t_\ell,t_{r+1}]\to L^\infty[t_\ell,t_{r+1}]\| \leq c_k.
\end{equation*}
\end{theorem}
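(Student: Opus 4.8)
The plan is to follow the route by which de Boor's conjecture was eventually established. Since $P_\Delta$ is a self-adjoint projection, $\|P_\Delta:L^\infty\to L^\infty\|=\|P_\Delta:L^1\to L^1\|$, and writing $P_\Delta f=\int_{t_\ell}^{t_{r+1}}G(x,y)f(y)\dif y$ with the symmetric Schwartz kernel $G(x,y)=\sum_i N_i^*(x)N_i(y)$, this common value equals $\sup_x\int_{t_\ell}^{t_{r+1}}|G(x,y)|\dif y$. Expanding $N_i^*$ through \eqref{eq:projBsplines} one sees, by de Boor's classical reduction, that this quantity is comparable, up to a factor depending only on $k$, to $\max_i\|N_i^*\|_{L^1}$, i.e.\ to a weighted $\ell^\infty$-norm of the inverse of the B-spline Gram matrix $M=(\langle N_i,N_j\rangle)_{i,j}$. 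Hence the whole task is to bound $\max_i\|N_i^*\|_{L^1}$ by a constant depending only on $k$. Each $N_i^*$ is itself a spline of order $k$ on $\Delta$; on $[t_i,t_{i+k}]$ it has size of order $1/\|N_i\|_{L^1}=k/(t_{i+k}-t_i)$, and it decays away from that interval, so morally $\|N_i^*\|_{L^1}$ should be of order one --- the whole difficulty lies in making this decay quantitative and, above all, uniform with respect to the mesh ratios.

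The first genuine ingredient I would establish is an exponential off-diagonal decay of $G$: there are $\kappa_k$ and $\rho_k\in(0,1)$ such that $|G(x,y)|\le\kappa_k\|P_\Delta\|_\infty\,\rho_k^{\,m(x,y)}$, up to the correct local rescaling, where $m(x,y)$ is the number of knots strictly between $x$ and $y$. The mechanism is that, for fixed $x$, on any knot interval disjoint from a fixed neighbourhood of $x$ the function $y\mapsto G(x,y)$ is controlled, through the orthogonality relations defining $P_\Delta$, by its values nearer to $x$ --- a Neumann/Chebyshev estimate for the $k$-banded matrix $M$ of Demko--Moss--Smith type. The delicate point is that the rate $\rho_k$ must be extracted from data that is already independent of $\Delta$; the input here is the uniform $L^2$-stability of the B-spline basis, equivalently that the smallest eigenvalue of the $L^2$-normalised Gram matrix is bounded below by a constant depending only on $k$, so that the $\ell^2$ condition number of the normalised $M$ is mesh-independent.

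This decay by itself, however, does not bound $\|G(x,\cdot)\|_{L^1}$, and closing that gap is the heart of the matter and the step I expect to be the main obstacle. The point is that transferring an $\ell^2$ estimate for $M^{-1}$ into the mixed ($L^1$ on one side, partition-of-unity on the other) estimate one actually needs introduces factors $(h_i/h_j)^{1/2}$ with $h_i=t_{i+k}-t_i$, and for adjacent indices these mesh ratios are unbounded and are not absorbed by $\rho_k^{|i-j|}$. Overcoming this is exactly what resisted proof for roughly three decades. Following Shadrin, I would set up a recursive scheme over the knot sequence, relating the problem on $\Delta$ to problems on sub-configurations obtained by splitting at a carefully chosen knot, and show that the correction term is localised near that knot and controlled by \emph{mesh-independent Markov-type inequalities} for splines --- bounding $\|s^{(j)}\|_\infty$ on an interval by $\|s\|_\infty$ and the interval length with a constant depending only on $k$, with the appropriate weights where knots cluster --- together with the total-positivity (checkerboard sign) structure of B-spline collocation and Gram matrices. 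Making this recursion close so that no power of a mesh ratio survives is precisely the content of de Boor's conjecture and is what Shadrin's argument accomplishes; I would not expect a materially shorter route.
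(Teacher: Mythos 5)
This statement is not proved in the paper at all: it is imported verbatim from \cite{Shadrin2001} and used only through its equivalent matrix reformulations \eqref{eq:decayineq} and \eqref{eq:improvedversion}, so there is no in-paper argument to compare yours against. Judged on its own terms, your set-up is correct and standard: the identity $\|P_\Delta\|_{\infty\to\infty}=\sup_x\int|G(x,y)|\dif y$ for the symmetric kernel $G(x,y)=\sum_i N_i^*(x)N_i(y)$, de Boor's reduction to a weighted $\ell^\infty$ bound on the inverse Gram matrix (equivalently to $\max_i\|N_i^*\|_{L^1}$), and the Demko-type decay obtained from the mesh-independent $\ell^2$ condition number of the normalised Gram matrix, which yields $|a_{ij}|\lesssim\gamma^{|i-j|}(h_ih_j)^{-1/2}$. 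You also locate the obstruction exactly where it is: this $\ell^2$-based bound falls short of \eqref{eq:decayineq} by the unbounded factor $\bigl(\max(h_i,h_j)/\min(h_i,h_j)\bigr)^{1/2}$, which is not absorbed by $\gamma^{|i-j|}$ for nearby indices.

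The genuine gap is that your final paragraph does not close this obstruction; it only describes, in qualitative terms, the kind of argument that would close it, and then explicitly hands the matter back to Shadrin (``is precisely the content of de Boor's conjecture and is what Shadrin's argument accomplishes''). A recursion ``splitting at a carefully chosen knot'' with corrections controlled by Markov inequalities and total positivity is a plausible caricature of \cite{Shadrin2001}, but none of its load-bearing parts are supplied: which intermediate quantity is iterated, why the correction at the splitting knot is bounded by a constant depending only on $k$ rather than on a local mesh ratio, and why the recursion terminates with no accumulated mesh-dependence. Since that is the entire thirty-year difficulty, the attempt as written reduces the theorem to an equally hard (indeed, the same) statement rather than proving it. Within the scope of this paper the honest ``proof'' is the citation; if you want to actually prove the theorem you must reproduce Shadrin's estimate of $\max_i\|N_i^*\|_{L^1}$ in full, and nothing in your sketch yet does so.
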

We are also interested in the following equivalent formulation of this theorem, which is proved in \cite{Ciesielski2000}: for a knot sequence
$\Delta$, let $(a_{ij})$ be the matrix $(\langle N_i^*,N_j^*\rangle)$, which is the inverse of the banded
matrix $(\langle N_i,N_j\rangle)$. 
Then, the assertion of Theorem \ref{thm:shadrin}
is equivalent to the existence of two constants $K_0>0$ and $\gamma_0\in (0,1)$ only depending on the spline order
$k$ such that 
\begin{equation}\label{eq:decayineq}
	|a_{ij}| \leq \frac{K_0 \gamma_0^{|i-j|}}{\max\{\kappa_i,\kappa_j\}}, \qquad \ell\leq i,j\leq r,
\end{equation}
 where $\kappa_i$ denotes the length of $\supp N_i$.
The proof of this equivalence uses Demko's theorem
\cite{Demko1977} on the geometric decay of inverses of band
matrices and de Boor's stability (see \cite{deBoor1973} or \cite[Chapter 5, Theorem 4.2]{DeVoreLorentz1993}) which
states that for $0<p\leq \infty$, the $L^p$ norm of a B-spline series is equivalent to a weighted $\ell^p$ norm of its coefficients,
i.e. there exists a constant $D_k$ only depending on the spline order $k$ such that:
\begin{equation*}
	D_k k^{-1/p}\Big(\sum_{j} |c_j|^p \kappa_j \Big)^{1/p} \leq \Big\| \sum_{j} c_j N_j\Big\|_{L^p} \leq
	\Big(\sum_{j} |c_j|^p \kappa_j \Big)^{1/p}.
\end{equation*}

In fact, for $a_{ij}$, we actually have the following improvement of \eqref{eq:decayineq}
(see \cite{ShadrinPassenbrunner2014}): There exist two constants $K>0$ and
$\gamma\in (0,1)$ that depend only on the spline order $k$ such that
\begin{equation}
	\label{eq:improvedversion}
	|a_{ij}| \leq \frac{K\gamma^{|i-j|}}{h_{ij}}, \qquad \ell\leq i,j\leq r,
\end{equation}
where $h_{ij}$ denotes the length of the convex hull of $\supp N_i\cup\supp N_j$. 
This inequality can be used to obtain almost everywhere convergence for spline
projections of $L^1$ functions:
\begin{theorem}[\cite{ShadrinPassenbrunner2014}]
	\label{thm:aenonperiodic}
	For all $f\in L^1[t_\ell,t_{r+1}]$ 
	there exists a subset $A\subset [t_\ell,t_{r+1}]$ of full Lebesgue measure such that 
	for all sequences $(\Delta_n)$ of partitions of $[t_\ell,t_{r+1}]$ 
	such that $|\Delta_n|\to
	0$, we have
	\begin{equation*}
		\lim_{n\to\infty}P_{\Delta_n} f (x) = f(x),\qquad x\in A.
	\end{equation*}
\end{theorem}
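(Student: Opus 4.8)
The plan is to establish a universal weak-type maximal inequality for the family $\{P_\Delta\}$ and then run the standard density argument. Write the orthoprojector in kernel form, $P_\Delta f(x)=\int_{t_\ell}^{t_{r+1}}K_\Delta(x,y)f(y)\dif y$ with $K_\Delta(x,y)=\sum_{i,j}a_{ij}N_i(x)N_j(y)$, where $(a_{ij})$ is the matrix appearing in \eqref{eq:improvedversion}. Since at every point at most $k$ of the B-splines are non-zero and $0\le N_i\le1$, \eqref{eq:improvedversion} yields the pointwise estimate
\[
|K_\Delta(x,y)|\le\sum_{i:\,x\in\supp N_i}\ \sum_{j:\,y\in\supp N_j}\frac{K\gamma^{|i-j|}}{h_{ij}},
\]
and here $h_{ij}\ge|x-y|$ because the convex hull of $\supp N_i\cup\supp N_j$ contains both $x$ and $y$. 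The first real step is to convert this into domination by an approximate-identity--type kernel: grouping the admissible pairs $(i,j)$ according to the number $m$ of knots of $\Delta$ lying strictly between $x$ and $y$ --- which forces $|i-j|\ge m-k$, while $h_{ij}$ is then comparable to the length of the shortest interval $I_m(x)$ around $x$ spanned by those $m$ knots --- one obtains a bound of the form
\[
|K_\Delta(x,y)|\le C_k\sum_{m\ge0}\gamma^{m/2}\,\frac{\charfun_{I_m(x)}(y)}{|I_m(x)|}.
\]

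Now define the maximal operator $P^*f(x)=\sup_\Delta|P_\Delta f(x)|$, the supremum taken over all partitions $\Delta$ of $[t_\ell,t_{r+1}]$ (one may restrict to knots with rational coordinates so that $P^*f$ is measurable). Integrating the previous bound against $|f|$ and using that $|I_m(x)|^{-1}\int_{I_m(x)}|f|\le Mf(x)$, where $M$ denotes the Hardy--Littlewood maximal operator, the geometric series in $m$ sums to give $P^*f\le C_k\, Mf$ pointwise; in particular $P^*$ is of weak type $(1,1)$ with a constant depending only on $k$.

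The second ingredient is convergence on a dense class. For $g$ continuous on $[t_\ell,t_{r+1}]$, Shadrin's Theorem~\ref{thm:shadrin} together with a Jackson-type estimate for spline approximation (see, e.g., \cite{DeVoreLorentz1993}) gives
\[
\|P_\Delta g-g\|_\infty\le(1+c_k)\,\dist_{L^\infty}\!\big(g,\mathcal S_k(\Delta)\big)\le(1+c_k)\,\omega(g,|\Delta|),
\]
with $\omega$ the modulus of continuity of $g$; the right-hand side tends to $0$ as $|\Delta|\to0$, so $P_\Delta g(x)\to g(x)$ for every $x$ and every sequence of partitions whose mesh tends to zero.

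Finally, the transference step. Fix $f\in L^1$ and $\varepsilon>0$, and split $f=g+h$ with $g$ continuous and $\|h\|_{L^1}<\varepsilon$. Using the uniform convergence $P_\Delta g\to g$,
\[
\limsup_{|\Delta|\to0}|P_\Delta f-f|\le|h|+P^*h\qquad\text{pointwise,}
\]
so for each $\lambda>0$, Chebyshev's inequality and the weak-$(1,1)$ bound for $P^*$ give $\bigl|\{\limsup_{|\Delta|\to0}|P_\Delta f-f|>\lambda\}\bigr|\le C_k\varepsilon/\lambda$. Letting $\varepsilon\to0$ and then $\lambda\to0$ along a sequence shows that $\limsup_{|\Delta|\to0}|P_\Delta f-f|=0$ off a null set depending on $f$ alone, and since convergence along the net of all partitions ordered by mesh width implies convergence along every individual sequence $(\Delta_n)$ with $|\Delta_n|\to0$, this is precisely the assertion. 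I expect the main obstacle to be the chain from \eqref{eq:improvedversion} to the approximate-identity bound and then to $P^*f\le C_k\, Mf$: the delicate part is converting the purely \emph{combinatorial} index distance $|i-j|$, which drives the geometric decay, into genuine metric information (the lengths $h_{ij}$ and the number of intervening knots) uniformly over all --- possibly highly non-uniform --- knot sequences, and then summing the resulting series against $Mf$.
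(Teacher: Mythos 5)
This theorem is not proved in the present paper at all --- it is imported verbatim from \cite{ShadrinPassenbrunner2014} --- so the only meaningful comparison is with that reference, and your proposal does reconstruct its strategy correctly: dominate the orthoprojector by the Hardy--Littlewood maximal operator uniformly in $\Delta$ using \eqref{eq:improvedversion}, get convergence on continuous functions from Shadrin's theorem plus a Jackson estimate, and transfer by the standard weak-$(1,1)$ density argument. The one place where your write-up is genuinely shaky is exactly the step you flag: the passage from \eqref{eq:improvedversion} to the approximate-identity bound $|K_\Delta(x,y)|\le C_k\sum_m\gamma^{m/2}\charfun_{I_m(x)}(y)/|I_m(x)|$. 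For a fixed pair $(x,y)$ the number $m$ of intervening knots is a single integer, and for a highly non-uniform knot sequence the ``shortest interval around $x$ spanned by $m$ knots'' need not contain $y$, nor need its length be dominated by $h_{ij}$ (the $m$ knots between $x$ and $y$ may be bunched near $x$ while knots on the other side of $x$ are spread out), so as stated the claimed kernel bound does not follow. But this detour is unnecessary: integrate first rather than bounding the kernel pointwise. Since $\conv(\supp N_i\cup\supp N_j)$ is an interval of length $h_{ij}$ containing $x$ whenever $i\in\indset(x)$, one gets
\begin{equation*}
|P_\Delta f(x)|\le\sum_{i\in\indset(x)}\sum_j\frac{K\gamma^{|i-j|}}{h_{ij}}\int_{\supp N_j}|f|\le K\sum_{i\in\indset(x)}\sum_j\gamma^{|i-j|}\,\frac{1}{h_{ij}}\int_{\conv(\supp N_i\cup\supp N_j)}|f|\lesssim Mf(x),
\end{equation*}
using that the average of $|f|$ over any interval containing $x$ is at most (twice) $Mf(x)$, that $\#\indset(x)\le k$, and the geometric series in $|i-j|$. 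With this replacement your weak-type bound, the dense-class estimate, and the transference step are all correct, and the resulting proof is essentially the one in \cite{ShadrinPassenbrunner2014}.
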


Our aim in this article is to prove an analogue of 
Theorem \ref{thm:aenonperiodic} for orthoprojectors on periodic spline spaces. In this case, we do not have a
periodic version of \eqref{eq:improvedversion} at our disposal, since the proof of this inequality
does not carry over to the periodic setting.
However, by comparing orthogonal projections onto periodic spline spaces to suitable non-periodic projections, we
are able to obtain a periodic version of Theorem~\ref{thm:aenonperiodic}.

In the course of the proof of the periodic version of Theorem~\ref{thm:aenonperiodic}, 
we also need a periodic version of Theorem
\ref{thm:shadrin}, which can be proved by first establishing the same assertion for infinite point sequences and
then by viewing periodic functions as defined on the whole real line [A. Shadrin, private communication].
The proof of Theorem~\ref{thm:shadrin} for infinite 
point sequences is announced in
\cite{Shadrin2001} and carried out \cite{deBoor2012}. 
In this article we give a different proof of the periodic version of Shadrin's theorem by employing a
similar comparison of periodic and non-periodic projection operators as in the proof of the periodic
version of Theorem \ref{thm:aenonperiodic}.
%
This proof directly passes from the interval case to the periodic result 
 without recourse to infinite point sequences.

\subsection{Periodic splines}\label{sec:persplinesdef}
Let $n\geq k$ be a natural number and $\widetilde{\Delta}=(s_j)_{j=0}^{n-1}$ be a sequence of distinct points on
the torus $\mathbb T=\mathbb R / \mathbb Z$ identified
canonically with $[0,1)$, such that for all $j$ we have
\begin{align*}
	s_j &\leq s_{j+1}, \qquad s_j<s_{j+k},
\end{align*}
and we extend $(s_j)_{j=0}^{n-1}$ periodically by 
  \begin{equation*}
   s_{rn+j} = r + s_j 
  \end{equation*}
  for $r\in \mathbb Z\setminus \{0\}$ and $0\leq j\leq n-1$.


Now, the main result of this article reads as follows:

\begin{theorem}\label{thm:aeperiodic} For all functions $f\in L^1(\mathbb T)$ there exists a set $\widetilde{A}$ of
	full Lebesgue measure
	such that for all sequences of partitions
	 $(\widetilde{\Delta}_n)$  
	on $\mathbb T$ as above with $|\widetilde\Delta_n|\to 0$, we have 
\begin{equation*}
	\lim_{n\to\infty}\tP_{n} f(x) = f(x),\qquad x\in \widetilde A, 
\end{equation*}
where $\tP_n$ denotes the orthogonal projection operator
onto the periodic spline space of order $k$ with knots $\widetilde
\Delta_n$.
\end{theorem}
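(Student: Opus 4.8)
The strategy is to reduce the periodic statement to the non-periodic Theorem~\ref{thm:aenonperiodic} by a localization and comparison argument. Fix $f \in L^1(\mathbb T)$ and extend it $1$-periodically to the real line; I will work on a fixed interval such as $[-1,2]$ (or any interval containing $[0,1]$ in its interior with room to spare), where the restriction of $f$ lies in $L^1[-1,2]$. The core difficulty is that a periodic spline on $\widetilde\Delta_n$ is \emph{not} the restriction of a non-periodic spline on the knots $\widetilde\Delta_n$ viewed inside $[-1,2]$: the periodic space has global coupling through the wrap-around, whereas the non-periodic orthoprojector on an interval does not see beyond its endpoints. The resolution is the exponential localization of the dual B-splines encoded in \eqref{eq:improvedversion}: away from the endpoints of the underlying interval, $P_\Delta f$ depends only very weakly on what $f$ does near those endpoints, and the same mechanism should govern periodic orthoprojectors once a periodic analogue of the decay estimate \eqref{eq:improvedversion} for the Gram-inverse coefficients is in hand.

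First I would establish the periodic version of Shadrin's theorem (Theorem~\ref{thm:shadrin}): the operators $\tP_n$ are bounded on $L^\infty(\mathbb T)$ by a constant depending only on $k$. As the introduction indicates, this is done by the announced comparison technique — embed the periodic knot configuration into a long non-periodic knot sequence on a large interval, apply Theorem~\ref{thm:shadrin} there, and transfer the bound back, using that a periodic function on a long interval, far from the boundary, is well-approximated by the non-periodic orthoprojector. Concretely, one writes $\tP_n = \sum_j \langle f, \tN_j\rangle \tN_j^{*}$ with $(\tN_j^{*})$ the periodic dual basis, and shows that the periodic Gram matrix $(\langle \tN_i,\tN_j\rangle)$ — now a \emph{periodic} band matrix (band structure wrapping around the cycle) — has an inverse $(\tilde a_{ij})$ satisfying a decay estimate of the form $|\tilde a_{ij}| \le K\gamma^{d_n(i,j)}/h_{ij}$, where $d_n(i,j)$ is the cyclic distance between indices. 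This follows from Demko's theorem together with the periodic de Boor stability, exactly paralleling the derivation of \eqref{eq:decayineq}; the improvement to the convex-hull denominator \eqref{eq:improvedversion} is the delicate point, but for the a.e.\ convergence argument the weaker cyclic-decay estimate with $\max\{\kappa_i,\kappa_j\}$ in the denominator, combined with the smallness of the mesh, should already suffice.

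With the localization estimate available, the main argument runs as follows. Let $A \subset [-1,2]$ be the full-measure set furnished by Theorem~\ref{thm:aenonperiodic} applied to $f|_{[-1,2]}$, and set $\widetilde A = A \cap (0,1)$ (suitably identified with a full-measure subset of $\mathbb T$). Fix $x \in \widetilde A$. Given the periodic partition $\widetilde\Delta_n$ with $|\widetilde\Delta_n| \to 0$, I would build an \emph{auxiliary non-periodic} partition $\Delta_n$ of $[-1,2]$ that agrees with the periodic knots $\widetilde\Delta_n$ (unrolled over two or three periods) on a neighborhood of $[0,1]$, with $|\Delta_n| \to 0$. Then I compare $\tP_n f(x)$ with $P_{\Delta_n} f(x)$: both are sums $\sum_j \langle f, N_j\rangle N_j^{*}(x)$ over the B-splines whose support is near $x$, and because $x$ is bounded away from the artificial endpoints $-1$ and $2$, the cyclic distance (periodic case) and the ordinary index distance (interval case) from the relevant indices to the "boundary region where the two configurations differ" are at least of order $1/|\widetilde\Delta_n| \to \infty$. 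The decay estimates from the previous step then force $|\tP_n f(x) - P_{\Delta_n} f(x)| \le C \gamma^{c/|\widetilde\Delta_n|}\|f\|_{L^1} \to 0$. Since $P_{\Delta_n} f(x) \to f(x)$ by Theorem~\ref{thm:aenonperiodic}, we conclude $\tP_n f(x) \to f(x)$.

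The step I expect to be the main obstacle is making the comparison $|\tP_n f(x) - P_{\Delta_n} f(x)|$ rigorous: this requires a clean quantitative statement that changing the knot configuration (and passing between "periodic glueing" and "free boundary") outside a region at cyclic distance $m$ from the index of interest perturbs the dual functional $N_j^{*}$, hence the local value of the projection, by at most $C\gamma^{m}$ — and doing this uniformly over the sequence $n$. This is essentially a stability-of-the-inverse-matrix lemma for band matrices under perturbations localized far from a given entry, and controlling the $h_{ij}$ (or $\kappa_i$) denominators so that the $L^1$ pairing $\langle f, N_j\rangle$ — which is of size $\|f\|_{L^1}$ times the $L^\infty$ bound of $N_j$, i.e.\ $O(\|f\|_{L^1})$ — gets correctly damped, is where the bookkeeping is heaviest. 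Once that lemma is in place, the rest is assembling the pieces and a routine $\varepsilon$–$\delta$ argument.
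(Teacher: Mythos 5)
Your overall strategy (reduce to Theorem~\ref{thm:aenonperiodic} by comparing $\tP_n$ with a non-periodic projection) is the right one, but the step you yourself flag as ``the main obstacle'' is precisely the heart of the proof, and the route you sketch for it --- a perturbation lemma for inverses of band matrices comparing the periodic Gram inverse $(\widetilde a_{ij})$ with the non-periodic one, built on periodic versions of Demko's theorem and de Boor stability --- is left entirely unproved and is considerably harder than necessary. The idea you are missing is that no entrywise comparison of Gram inverses is needed: take the non-periodic knot sequence $\Delta_n$ on $[0,1]$ itself obtained by giving the points $0$ and $1$ multiplicity $k$ and keeping the interior knots of $\widetilde\Delta_n$, and consider $g_n=T\tP_n f-P_nTf$. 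This function lies in the non-periodic spline space $\mathcal S_k(\Delta_n)$, and writing $g_n=(T\tP_nf-Tf)+(Tf-P_nTf)$ one sees that $\langle g_n,N_j^{(n)}\rangle=0$ for every \emph{interior} index $j$ (the first summand is orthogonal to the interior $N_j^{(n)}$ because these coincide with periodic B-splines and $\tP_n$ is an orthogonal projection; the second is orthogonal to all $N_j^{(n)}$ because $P_n$ is). Hence $g_n=\sum_{j\in J_n}\langle g_n,N_j^{(n)}\rangle N_j^{(n)*}$ with $J_n$ a set of at most $2k$ boundary indices, and Lemma~\ref{lem:baseineq} with $p=1$ gives $|g_n(x)|\lesssim\gamma^{d(\indset_n(x),J_n)}\|g_n\|_1/\min(x,1-x)\to0$ for each fixed $x\neq 0$, using only the \emph{non-periodic} decay estimate \eqref{eq:improvedversion} plus the $L^1$-boundedness of $\tP_n$ (the dual of Theorem~\ref{th:boundedperiodic}) to control $\|g_n\|_1$.

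A second, concrete error: your claim that ``the weaker cyclic-decay estimate with $\max\{\kappa_i,\kappa_j\}$ in the denominator, combined with the smallness of the mesh, should already suffice'' is false. For the relevant indices ($m$ near $x$, $j$ near the boundary or the seam) both $\kappa_m$ and $\kappa_j$ tend to zero as $|\widetilde\Delta_n|\to0$ --- and $\kappa_m$ can be made as small as one likes relative to $\gamma^{d}$ by clustering knots near $x$ --- so the factor $1/\max\{\kappa_m,\kappa_j\}$ is not dominated by the geometric decay. What saves the argument is the convex-hull denominator $h_{jm}$ of \eqref{eq:improvedversion}, which stays bounded below by $\min(x,1-x)$ because $\supp N_j^{(n)}$ contains $0$ or $1$ while $x\in\supp N_m^{(n)}$; this is exactly the improvement of \cite{ShadrinPassenbrunner2014} over \eqref{eq:decayineq}, and it is essential here just as it is in the proof of Theorem~\ref{thm:aenonperiodic}. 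Finally, note that the logical order matters: the $L^1$/$L^\infty$ uniform bound for $\tP_n$ (Theorem~\ref{th:boundedperiodic}) must be established first, and the paper does this by the same projection-comparison device (Lemma~\ref{lem:permain}) rather than by the periodic Demko route you propose, which would require separately justified periodic analogues of Demko's theorem and de Boor stability.
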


In order to prove this result, we also need a periodic version of Theorem \ref{thm:shadrin}:

\begin{theorem}\label{th:boundedperiodic}
	There exists a constant $c_k$ depending only on the spline order $k$ such that for all
	knot sequences $\widetilde\Delta=(s_j)_{j=0}^{n-1}$ on $\mathbb T$, the associated orthogonal projection
	operator $\widetilde{P}$ satisfies the inequality
\begin{equation*}
	\|\widetilde P: L^\infty(\mathbb T)\to L^\infty(\mathbb T)\| \leq c_k.
\end{equation*}
\end{theorem}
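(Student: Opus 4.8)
The plan is to deduce the periodic bound from Shadrin's theorem (Theorem~\ref{thm:shadrin}) on an interval by a comparison argument, exactly as the introduction advertises. Fix a knot sequence $\widetilde\Delta=(s_j)_{j=0}^{n-1}$ on $\mathbb T$ and a function $f\in L^\infty(\mathbb T)$; we want to estimate $\tP f(x)$ at a fixed point $x\in\mathbb T$. The idea is to ``cut'' the torus far away from $x$ and realize the periodic spline space locally as (a piece of) an ordinary spline space on a long interval. Concretely, I would choose an index $m$ so that $s_m$ is roughly antipodal to $x$, and consider the non-periodic knot sequence obtained by taking the periodically extended knots $(s_j)_{j\in\mathbb Z}$ restricted to a window $s_m \le s_j \le s_{m}+L$ for a suitable number $L$ slightly larger than $1$ (so that the window wraps once around the torus and comfortably contains $x$ in its interior), with appropriate $k$-fold knots inserted at the two endpoints $s_m$ and $s_m+L$ to match the boundary conditions in the definition of $\mathcal S_k(\Delta)$. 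Call the resulting finite knot sequence $\Delta$ and $P=P_\Delta$ the associated interval orthoprojector; by Theorem~\ref{thm:shadrin}, $\|P\|_{L^\infty\to L^\infty}\le c_k$ uniformly.

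The key point is then a locality estimate: at the point $x$, which sits deep inside the window and far (in the spline-combinatorial sense of at least $\sim n/2$ knot intervals, but more importantly with a fixed geometric amount of ``mass'' of the partition) from the artificial cut, the value $\tP f(x)$ and $P\tilde f(x)$ differ by a negligible amount, where $\tilde f$ is $f$ restricted to the window (or $f$ times the indicator of the window, extended by its periodization — the natural identification). To make this precise I would expand both operators in the dual B-spline basis via \eqref{eq:projBsplines}: writing $\tP f = \sum_i \langle f,\tN_i\rangle \tN_i^*$ and using that the periodic Gram matrix and the interval Gram matrix agree on all rows/columns whose B-splines are supported inside the window away from the cut, the difference of the two inverse Gram matrices (restricted to indices near $x$) is controlled. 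Here is where I would invoke the geometric decay of the inverse Gram matrix — the inequality \eqref{eq:decayineq}, equivalently Demko's theorem together with de~Boor's stability — to conclude that perturbing the matrix only near the cut changes the entries $a_{ij}$ with $i,j$ near $x$ by something that decays geometrically in the number of knot intervals between $x$ and the cut. Since $|\widetilde\Delta|\to 0$ is \emph{not} assumed in this theorem, I must be careful: decay in $|i-j|$ is what is available, and the cut is $\gtrsim n/2$ B-spline indices away from $x$, so the perturbation is $O(\gamma^{cn})$ which is harmless; combined with $\|f\|_\infty$ this gives $|\tP f(x)|\le c_k\|f\|_\infty + (\text{small})\le c_k'\|f\|_\infty$.

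A cleaner route that avoids estimating matrix perturbations directly: observe that any periodic spline $s$ in the periodic spline space, restricted to the window, lies in $\mathcal S_k(\Delta)$, and conversely elements of $\mathcal S_k(\Delta)$ need not be periodic but agree with periodic splines near $x$. One can then compare $\tP f$ with $P$ applied to a periodization/truncation of $f$ by testing against B-splines $\tN_i$ with $\supp\tN_i$ near $x$: the defining orthogonality relations $\langle \tP f, \tN_i\rangle=\langle f,\tN_i\rangle$ and $\langle P\tilde f,N_i\rangle=\langle \tilde f,N_i\rangle$ coincide for such $i$, so $\tP f - P\tilde f$ is a spline (on the window) that is orthogonal to all B-splines supported well inside the window; by the decay estimate \eqref{eq:decayineq} applied to the dual basis expansion, such a spline is pointwise small at $x$ relative to $\|f\|_\infty$, with smallness again geometric in the distance to the cut. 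Either way, $|\tP f(x)|\lesssim_k \|f\|_\infty$, and taking the supremum over $x$ finishes the proof.

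The main obstacle I expect is the bookkeeping in the comparison step: one must set up the window, the inserted boundary knots, and the index correspondence so that (i) $x$ is genuinely interior and a controlled combinatorial distance from the cut, (ii) the two Gram matrices literally coincide on the relevant block, and (iii) the geometric-decay estimate is applied to exactly the right submatrix. A subtle point is that \eqref{eq:decayineq} gives decay in $|i-j|$, i.e. in the number of knots, and one needs the cut to be many knots away from $x$ — which is automatic here because the window wraps the whole torus once, so there are $\sim n \ge k$ knots between $x$ and the cut; no hypothesis on the mesh size is needed. Making assertion (iii) rigorous — that changing a few rows/columns near the boundary of a banded positive-definite matrix perturbs the far-away entries of the inverse only geometrically — is the technical heart, and is most naturally phrased via a Neumann-series / resolvent identity for the inverse Gram matrices combined with \eqref{eq:decayineq}.
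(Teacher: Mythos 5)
Your overall architecture --- unroll the torus to a long interval, compare $\tP$ with the interval projector $P$ of Theorem~\ref{thm:shadrin}, and observe that the difference is a spline in $\mathcal S_k(\Delta)$ orthogonal to every B-spline away from the cut, hence a combination of dual B-splines indexed near the cut --- is exactly the paper's. The gap is in the quantitative step where you declare the boundary contribution ``harmless''. You bound it by geometric decay in the \emph{combinatorial} distance from $x$ to the cut, via \eqref{eq:decayineq}, and claim this distance is $\gtrsim n/2$ so the error is $O(\gamma^{cn})$. Both halves fail. First, if $s_m$ is chosen geometrically antipodal to $x$, the number of knots between $x$ and the nearer end of the window can be $O(1)$ or even $0$ (cluster almost all knots in a small arc on one side); wrapping once around the torus guarantees $\sim n$ knots in total, not $\sim n$ knots separating $\indset(x)$ from $J$. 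Second, and more fundamentally, even when that distance is $\sim n/2$, the dual functionals near the cut carry the factor $1/\max(\kappa_i,\kappa_j)$ from \eqref{eq:decayineq} (equivalently $|N_j^*(x)|$ can be of order $\kappa_j^{-1}$ up to the decay factor), while the coefficient $\langle g,N_j\rangle$ only gains $\kappa_j^{1/2}$ via Cauchy--Schwarz against $\|g\|_2\lesssim\|f\|_\infty$. Since the knot intervals near the cut may be super-exponentially small in $n$ (say of length $2^{-n^2}$), the product $\gamma^{n/2}\kappa_j^{-1/2}$ is unbounded: no hypothesis on the mesh is assumed, so the combinatorial distance cannot beat the mesh ratio. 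Your Route~1 (perturbing the inverse Gram matrix by a resolvent identity) has the further problem that it needs an a priori decay bound on the \emph{periodic} inverse Gram matrix, which by Remark~\ref{rem:adjoint}(ii) is essentially equivalent to the theorem being proved.

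Two repairs exist. (a) Your own phrase ``a fixed geometric amount of mass of the partition'' is the right idea, but it forces the stronger estimate \eqref{eq:improvedversion}: for $j\in J$ and $m\in\indset(x)$ the antipodal choice gives $h_{jm}\ge 1/2$, so the \emph{first} bound of Lemma~\ref{lem:baseineq} with $p=2$ yields $|g(x)|\lesssim\|g\|_2\max_{j}\kappa_j^{1/2}/h_{jm}\lesssim\|f\|_\infty$ outright --- combinatorial distance plays no role. (b) The paper instead localizes in the \emph{input}: it decomposes $f=\sum_i f\,\charfun_{[s_i,s_{i+1})}$ and cuts the torus at $\supp f_i$ rather than at the antipode of $x$; then every boundary B-spline satisfies $\supp N_j\supseteq[s_i,s_{i+1}]$, so the loss $|\supp N_j|^{-1/2}\le|\supp f_i|^{-1/2}$ is exactly cancelled by $\|f_i\|_2\le\|f_i\|_\infty|\supp f_i|^{1/2}$, and the geometric decay in the combinatorial distance is used only to sum the resulting series over $i$. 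That route needs only the weaker \eqref{eq:decayineq}. As written, your proof does not close; either fix would make it work.
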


The idea of the proofs of Theorems \ref{thm:aeperiodic} and \ref{th:boundedperiodic} is to estimate the difference
between the periodic projection operator $\tP$ and the non-periodic projection operator $P$ for certain
non-periodic point sequences associated to $\widetilde\Delta=(s_i)_{i=0}^{n-1}$.

The article is organized as follows. In Section \ref{sec:lemmaBsplines}, we prove a simple lemma on the growth
behaviour of
linear combinations of non-periodic B-spline functions which is frequently needed later in the proofs of both Theorem
\ref{thm:aeperiodic} and Theorem~\ref{th:boundedperiodic}.
Section \ref{sec:shadrinperiodic} is devoted to the proof of Theorem \ref{th:boundedperiodic}, which is needed for
the proof of Theorem \ref{thm:aeperiodic} in Section \ref{sec:aeperiodic}.
Finally, in Section \ref{sec:doubly}, we also apply our method of proof to recover Shadrin's
theorem for infinite point sequences (see \cite{deBoor2012,Shadrin2001}).

\section{A simple upper estimate for B-spline sums} \label{sec:lemmaBsplines}
Let $A$ be a subset of $[t_\ell, t_{r+1}]$. Then, define the set of indices $\indset (A)$ whose B-spline supports
intersect with $A$ as 
\begin{equation*}
	\indset (A) := \{i : A\cap \supp N_i \neq \emptyset\}.
\end{equation*}
We also write $\indset(x)$ for $\indset(\{x\})$. If we have two subsets $U,V$ of indices, we write $d(U,V)$
for the distance between $U$ and $V$ induced by the metric $d(i,j) = |i-j|$.

We will use the notation $A(t)\lesssim B(t)$ to indicate the existence of a constant $C$ that depends only on the
spline order $k$ such that for all $t$ we have $A(t)\leq C B(t),$ where $t$ denotes all explicit or implicit
dependencies that the expressions $A$ and $B$ might have.

The fact that B-spline functions are localized, so a fortiori the set $\indset(x)$ is localized for any
$x\in[t_\ell,t_{r+1}]$,
can be used to derive the following lemma:
\begin{lemma} \label{lem:baseineq}
	Let $J$ be a subset of the index set $\{\ell,\ell+1,\ldots,r-1,r\}$,
	$f= \sum_{j \in J} \langle h,N_j\rangle N_j^*$ and $p\in [1,\infty]$. 
	Then, for all $x\in [t_\ell, t_{r+1}]$, we have the estimate
	\begin{align*}
		|f(x)| &\lesssim \gamma^{d(\indset(x),J)} \|h\|_p \max_{m\in\indset(x),j\in
		J}\frac{\kappa_j^{1/p'}}{h_{j m}}\\
		&\leq\gamma^{d(\indset(x),J)}\|h\|_{p}\max_{m\in \indset(x), j\in J}
		\big(\max\{\kappa_m,\kappa_j\}\big)^{-1/p} \\
		&\leq \gamma^{d(\indset(x),J)}\|h\|_p\cdot |I(x)|^{-1/p},\qquad 1\leq p\leq \infty,
	\end{align*}
	where $\gamma\in (0,1)$ is the constant appearing in \eqref{eq:improvedversion}, $I(x)$ is the interval
		$I=[t_i,t_{i+1})$ containing the point $x$ and the exponent $p'$ is such that $1/p+1/p'=1$.
\end{lemma}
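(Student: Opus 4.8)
The plan is to combine the expansion $f=\sum_{j\in J}\langle h,N_j\rangle N_j^*$ with three ingredients: the localization of the B-splines $N_i$ (at most $k$ of them are nonzero at any point $x$), the decay estimate \eqref{eq:improvedversion} for the Gram inverse $(a_{ij})$, and de Boor's stability estimate for B-spline series. First I would expand each dual function $N_j^*$ in the B-spline basis: writing $N_j^*=\sum_i a_{ij}N_i$ we get
\begin{equation*}
	f(x)=\sum_{j\in J}\langle h,N_j\rangle\sum_i a_{ij}N_i(x)
	=\sum_{i\in\indset(x)}\Big(\sum_{j\in J}a_{ij}\langle h,N_j\rangle\Big)N_i(x),
\end{equation*}
since $N_i(x)=0$ unless $i\in\indset(x)$. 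Because $0\le N_i\le 1$ and $\#\indset(x)\le k$, it suffices to bound $\big|\sum_{j\in J}a_{ij}\langle h,N_j\rangle\big|$ for a fixed $i\in\indset(x)$.

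Next I would estimate $|\langle h,N_j\rangle|$ by Hölder's inequality: $|\langle h,N_j\rangle|\le\|h\|_p\|N_j\|_{p'}\le\|h\|_p\,\kappa_j^{1/p'}$, using $0\le N_j\le 1$ and $\supp N_j$ has length $\kappa_j$. Inserting this together with \eqref{eq:improvedversion} gives
\begin{equation*}
	\Big|\sum_{j\in J}a_{ij}\langle h,N_j\rangle\Big|
	\le K\|h\|_p\sum_{j\in J}\frac{\gamma^{|i-j|}\kappa_j^{1/p'}}{h_{ij}}.
\end{equation*}
For each $j\in J$ one has $|i-j|\ge d(\indset(x),J)$ since $i\in\indset(x)$, so I can factor out $\gamma^{d(\indset(x),J)}$ and bound the remaining geometric sum $\sum_{j}\gamma^{|i-j|-d(\indset(x),J)}$ by a constant depending only on $\gamma$ (hence only on $k$), after pulling the maximum $\max_{m\in\indset(x),j\in J}\kappa_j^{1/p'}/h_{jm}$ out of the sum. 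This yields the first displayed inequality.

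The remaining two inequalities are elementary. For the second, one uses $h_{jm}\ge\max\{\kappa_j,\kappa_m\}$ (the convex hull of $\supp N_j\cup\supp N_m$ contains both supports) together with $\kappa_j^{1/p'}=\kappa_j^{1-1/p}\le h_{jm}^{1-1/p}$, so $\kappa_j^{1/p'}/h_{jm}\le h_{jm}^{-1/p}\le(\max\{\kappa_j,\kappa_m\})^{-1/p}$, with the convention for $p=\infty$ read as the obvious limit. For the third, if $x\in I(x)=[t_i,t_{i+1})$ then $I(x)\subset\supp N_m$ for every $m\in\indset(x)$ (each such B-spline is supported on an interval containing $I(x)$), hence $|I(x)|\le\kappa_m\le\max\{\kappa_m,\kappa_j\}$, which gives $(\max\{\kappa_m,\kappa_j\})^{-1/p}\le|I(x)|^{-1/p}$. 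I expect the only mildly delicate point to be the bookkeeping in the geometric-sum step — making sure the sum over $j\in J$ of $\gamma^{|i-j|}$ is uniformly bounded independently of $i$, $J$ and the knot sequence, which follows since the index distances $|i-j|$ for distinct $j$ take each nonnegative integer value at most twice; everything else is a direct application of the quoted facts.
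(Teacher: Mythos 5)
Your proposal is correct and follows essentially the same route as the paper: expand $N_j^*$ in the B-spline basis, localize to $m\in\indset(x)$, apply H\"older together with $\|N_j\|_{p'}\le\kappa_j^{1/p'}$ and the decay bound \eqref{eq:improvedversion}, and sum the geometric series; the only cosmetic difference is that you bound the outer sum via $\#\indset(x)\le k$ where the paper invokes the partition of unity. The derivations of the second and third inequalities also match the paper's (implicit) reasoning.
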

\begin{proof}
	Since $N_j^* = \sum_{m} a_{jm}N_m$,
\begin{equation*}
	f(x) = \sum_{j\in J} \sum_{m\in\indset(x)} a_{j m}\langle h, N_j\rangle N_m(x).
\end{equation*}
This implies
\begin{align*}
	|f(x)| \lesssim \max_{m\in \indset(x)} \Big(\sum_{j\in J} \frac{\gamma ^{|j-m|}}{h_{j m}} 
	\|h\|_{p} \|N_j\|_{p'}\Big),
\end{align*}
where we used inequality \eqref{eq:improvedversion} for $a_{jm}$,
H\"older's inequality with the conjugate exponent $p'=p/(p-1)$ to $p$ and the
fact that the B-spline functions $N_m$ form a partition of unity. Using again the uniform boundedness of $N_j$, we
obtain
\begin{equation*}
	|f(x)| \lesssim \max_{m\in\indset(x)}\Big(\sum_{j\in J} 
	\frac{\gamma^{|j-m|}}{h_{j m}}  \|h\|_p\kappa_j^{1/p'}\Big).
\end{equation*}
Summing the geometric series now yields the first estimate. The second and the third estimate are direct
consequences of the first one.
\end{proof}
\begin{remark}
	We note that we directly obtain the second estimate in the above lemma if we use the weaker inequality 
	\eqref{eq:decayineq} instead of \eqref{eq:improvedversion}.
\end{remark}

\section{The periodic spline orthoprojector is uniformly bounded on $L^\infty$}\label{sec:shadrinperiodic}
In this section, we give a direct proof of Theorem \ref{th:boundedperiodic} on the boundedness of 
periodic spline projectors 
without recourse to infinite knot sequences. Here, we will only use the geometric decay of the matrix
$(a_{jm})$ defined above for splines on an interval.

A vital tool in the proofs of both Theorem \ref{thm:shadrin} and Theorem \ref{thm:aenonperiodic} are B-spline
functions. We will also make extensive use of them and introduce their periodic version, cf.
\cite{Schumaker2007}.
Associated to the periodic point sequence $(s_j)_{j=0}^{n-1}$ and its periodic extension
as in Section~\ref{sec:persplinesdef} we define the non-periodic
  point sequence
  \begin{align*}
	  t_{j} = s_{j},\qquad \text{for }j=-k+1,\ldots,n+k-1
  \end{align*}
  and denote the corresponding non-periodic B-spline functions by $(N_{j})_{j=-k+1}^{n-1}$ with $\supp N_j =
  [t_j,t_{j+k}]$. Then we define for $x\in [0,1)$
  \begin{equation*}
	  \tN_j(x) = N_j(x),\qquad j=0,\ldots,n-k,
  \end{equation*}
  if we canonically identify $\mathbb T$ with $[0,1)$. Moreover, for $j=n-k+1,\ldots,n-1$,
	  \begin{equation*}
		  \tN_j(x) = \begin{cases}
			  N_{j-n}(x),& \text{if }x\in [0,s_j], \\
			  N_j(x), & \text{if }x\in (s_j,1).
		  \end{cases}
	  \end{equation*}

	  We denote by $\tP$ the orthogonal projection operator onto the space of periodic splines of order $k$
	  with knots $(s_j)_{j=0}^{n-1}$, which is the linear span of the B-spline functions
	  $(\tN_j)_{j=0}^{n-1}$ and 
similarly to the non-periodic case we define 
\begin{equation*}
	\indset(A)=\{0\leq j\leq n-1 : A\cap \supp \widetilde N_j\neq \emptyset\},\qquad A\subset \mathbb T.
\end{equation*}
\begin{lemma}\label{lem:permain}
	Let $f_i$ be a function on $\mathbb T$ with $\supp f_i \subset [s_i,s_{i+1}]$ for some index $i$ in the
	range $0\leq i\leq n-1$. Then,
		for any $x\in \mathbb T$,
		\begin{equation*}
			|\tP f_i(x)| \lesssim \gamma^{\td(\indset(x),\indset(\supp f_i))} \|f_i\|_\infty,
		\end{equation*}
		where $\td$ is the distance function induced by the canonical metric in $\mathbb
		Z/n\mathbb Z$ and $\gamma\in(0,1)$ is the constant appearing in inequality
		\eqref{eq:improvedversion}.
\end{lemma}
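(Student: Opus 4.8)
The plan is to compare $\tP f_i$ with a non-periodic spline projector obtained by cutting the torus \emph{inside the very interval carrying $f_i$}. Put $\lambda_i:=s_{i+1}-s_i$, let $c$ be the midpoint of $[s_i,s_{i+1}]$, and let $\Delta^c$ be the non-periodic knot sequence on $[c,c+1]$ obtained by inserting $c$ as a knot of multiplicity $k$, keeping the $s_j$ as interior knots, and unrolling $\mathbb T\cong[c,c+1)$. Write $P^c$, $(N^c_m)$, $(N^c_m)^*$, $\indset_c(\cdot)$, $d_c(\cdot,\cdot)$ for the orthogonal projector, B-splines, dual B-splines, index set and index distance attached to $\Delta^c$. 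Two elementary observations drive everything. (i) Every $N^c_m$ whose support meets the cut point contains one of the half-intervals $[s_i,c]$ or $[c,s_{i+1}]$, so $\kappa^c_m\ge\lambda_i/2$ for every such $m$ (this is precisely why the cut is placed at the midpoint). (ii) Every periodic spline, read off on $[c,c+1)$, lies in $\mathcal S_k(\Delta^c)$ — the $k$-fold end knots impose no condition — and since $\langle\cdot,\cdot\rangle_{\mathbb T}$ is the inner product of $[c,c+1]$, for every periodic spline $s$ one has $\langle\tP f_i-P^c f_i,s\rangle=\langle f_i,s\rangle-\langle f_i,s\rangle=0$. Hence the defect $g:=\tP f_i-P^c f_i$ lies in $\mathcal S_k(\Delta^c)$ and is orthogonal to all periodic splines.

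Because of (ii), in the dual expansion $g=\sum_m\langle g,N^c_m\rangle(N^c_m)^*$ the coefficient $\langle g,N^c_m\rangle$ vanishes whenever $N^c_m$ is itself a periodic B-spline, i.e. for all but the $\lesssim k$ indices $m$ in a set $J_c$ clustered at the cut; so $g=\sum_{m\in J_c}\langle g,N^c_m\rangle(N^c_m)^*$. For $m\in J_c$ the support of $N^c_m$ meets $\supp f_i$, so one keeps $\langle g,N^c_m\rangle=\langle\tP f_i-f_i,N^c_m\rangle$ and uses that $\tP$ is the $L^2(\mathbb T)$-orthogonal projector onto the periodic spline space, whence $\|\tP f_i-f_i\|_2\le\|f_i\|_2$ and, with $0\le N^c_m\le1$, $|\langle g,N^c_m\rangle|\le\|f_i\|_2\|N^c_m\|_2\le\|f_i\|_2\sqrt{\kappa^c_m}$. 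Combining this with the geometric decay of $(a^c_{jm})$ (through Lemma~\ref{lem:baseineq}, or directly via $|(N^c_m)^*(x)|\lesssim\sum_{\ell\in\indset_c(x)}|a^c_{\ell m}|\lesssim\gamma^{d_c(\indset_c(x),m)}/\kappa^c_m$) and summing the $\lesssim k$ terms of $J_c$ gives $|g(x)|\lesssim\|f_i\|_2\,\gamma^{d_c(\indset_c(x),J_c)}/\sqrt{\lambda_i}$. Since $\|f_i\|_2\le\|f_i\|_\infty\sqrt{|\supp f_i|}\le\|f_i\|_\infty\sqrt{\lambda_i}$, the two powers of $\sqrt{\lambda_i}$ cancel and $|g(x)|\lesssim\gamma^{d_c(\indset_c(x),J_c)}\|f_i\|_\infty$.

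Next one matches the $\Delta^c$-distances against $\td:=\td(\indset(x),\indset(\supp f_i))$. As $c$ lies in the same gap $[s_i,s_{i+1}]$ as $\supp f_i$, both $J_c$ and $\indset_c(\supp f_i)$ sit within a constant (depending on $k$) of the two endpoints of $[c,c+1]$, and the $\Delta^c$-index distance from $\indset_c(x)$ to an endpoint equals the cyclic distance on $\mathbb T$ from $x$ to $[s_i,s_{i+1}]$, hence is $\ge\td-c_k$; thus $d_c(\indset_c(x),J_c)\ge\td-c_k$ and $|g(x)|\lesssim\gamma^{\td}\|f_i\|_\infty$. For the other term, $P^c f_i=\sum_{j\in\indset_c(\supp f_i)}\langle f_i,N^c_j\rangle(N^c_j)^*$, so Lemma~\ref{lem:baseineq} with $p=\infty$ yields $|P^c f_i(x)|\lesssim\gamma^{d_c(\indset_c(x),\indset_c(\supp f_i))}\|f_i\|_\infty\lesssim\gamma^{\td}\|f_i\|_\infty$. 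Adding, $|\tP f_i(x)|\le|P^c f_i(x)|+|g(x)|\lesssim\gamma^{\td}\|f_i\|_\infty$, which is the assertion. (One then recovers Theorem~\ref{th:boundedperiodic} by decomposing $f=\sum_i f\charfun_{[s_i,s_{i+1}]}$ and summing the resulting geometric series in $i$.)

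The delicate point — the step to execute carefully — is the uniform bound on the defect $g$: one needs simultaneously that $g$ is spanned by only $\lesssim k$ dual B-splines concentrated at the cut, so that it decays geometrically away from $[s_i,s_{i+1}]$ \emph{without} invoking Theorem~\ref{th:boundedperiodic}, and that its coefficients are $O(\|f_i\|_\infty)$ rather than $O(\|f_i\|_\infty/\sqrt{\lambda_i})$; this homogeneity is exactly what the midpoint cut secures via $\kappa^c_m\gtrsim\lambda_i$, so that the Cauchy--Schwarz estimate $|\langle\tP f_i-f_i,N^c_m\rangle|\le\|f_i\|_2\|N^c_m\|_2$ scales correctly. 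One must also resist writing $\langle g,N^c_m\rangle=\langle\tP f_i,N^c_m\rangle$: since the cut sits next to $\supp f_i$ this is false, and the term $\langle f_i,N^c_m\rangle$ has to be absorbed by using the $L^2$-minimality of $\tP f_i$. The remaining reconciliation of index distances in $\Delta^c$ with the cyclic distance on $\mathbb T$ is routine.
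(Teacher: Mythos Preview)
Your proof is correct and follows essentially the same strategy as the paper's: compare $\tP f_i$ with a non-periodic spline projection obtained by cutting the torus near $\supp f_i$, show that the defect lies in $\mathcal S_k$ of the cut sequence with only $O(k)$ nonzero dual-B-spline coefficients clustered at the cut, and control these coefficients by an $L^2$ estimate together with the key scaling observation that every boundary B-spline has support length $\gtrsim\lambda_i$, so that the factor $\|f_i\|_2/\sqrt{\lambda_i}$ collapses to $\|f_i\|_\infty$. The only difference is the placement of the cut: the paper unrolls onto the overlapping interval $[s_i,s_{i+n+1}]$ of length $1+\lambda_i$ (so $Tf_i$ appears at both ends and every boundary B-spline contains a full copy of $[s_i,s_{i+1}]$), bounding the defect via $\|T\tP-PT\|_{L^2\to L^2}\le 2\sqrt 2$, whereas you cut at the midpoint $c$ of $[s_i,s_{i+1}]$ and work on $[c,c+1]$, which lets you avoid the two-to-one transfer map and invoke the $L^2$-minimality $\|\tP f_i-f_i\|_2\le\|f_i\|_2$ directly --- a cosmetic but slightly cleaner variant of the same idea.
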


\begin{proof} 
	We assume that the index $i$ is chosen such that $s_i<s_{i+1}$, since if $s_i=s_{i+1}$, the function $f_i$ is
	identically zero in $L^\infty$.

	Given a function $f$ on $\mathbb T$, we associate a non-periodic function $Tf$ defined on $[s_i,s_{i+n+1}]$
	given by 
  \begin{equation*}
    Tf(t) = f(\pi(t)), \qquad t\in [s_{i},s_{i+n+1}],
  \end{equation*}
  where $\pi(t)$ is the quotient mapping from $\mathbb R$ to $\mathbb T$.
  We observe that $T$ is a linear operator,
  $\| T : L^2(\mathbb T) \to L^2([s_i,s_{i+n+1}]) \| = \sqrt 2$
  and $\| T : L^\infty(\mathbb T) \to L^\infty([s_i,s_{i+n+1}]) \| =1 $. 
  Moreover, for $x\in \mathbb T$, let $r(x)$ be the representative of $x$ in the interval $[s_i,s_{i+n})$. We want to
	  estimate $\tP f_i(x)$. In order to do this, we first 
	  decompose
	  \begin{equation}
		  \label{eq:decomp1}
		  \tP f_i(x) = T\tP f_i(r(x)) =  PTf_i(r(x))+(T\tP f_i - PTf_i)(r(x)),
	  \end{equation}
	  where $P$ is the orthogonal projection operator onto the space of splines of order $k$ corresponding to
	  the point sequence $\Delta = (t_j)_{j=-k+1}^{n+k}$ associated to the 
	  non-periodic grid points in the
	  interval $[s_i,s_{i+n+1}]$, i.e.,
  \begin{align*}
	  t_j &= s_{i+j},\qquad j=0,\ldots,n+1, \\
    t_{-k+1} &= \cdots = t_{-1} = s_{i},\qquad t_{n+2} =
    \cdots = t_{n+k} =
    s_{i+n+1}.
  \end{align*} 
  Also, let $(N_j)_{i={-k+1}}^{n}$ be the $L^\infty$-normalized B-spline basis corresponding to this point
  sequence.

  We estimate the first term $PTf_i(r(x))$ 
  from the decomposition in \eqref{eq:decomp1} of $\tP f_i(x)$. Since $P$ is
  a projection operator onto splines on an interval, we use representation \eqref{eq:projBsplines} to get 
  \begin{equation*}
	  PTf_i(r(x)) = \sum_{j=-k+1}^{n} \langle Tf_i, N_j\rangle N_j^* (r(x)), 
  \end{equation*}
  and, since $\supp Tf_i\subset [s_i,s_{i+1}]\cup [s_{i+n},s_{i+n+1}] = [t_0,t_1]\cup [t_n,t_{n+1}]$ by definition of $f_i$ and $T$ and $\supp
  N_j \subset [t_j,t_{j+k}]$ for all $j=-k+1,\ldots,n$,
  \begin{equation*}
	  PTf_i(r(x)) = \sum_{j\in J_1} \langle Tf_i, N_j\rangle N_j^* (r(x)),
  \end{equation*}
  with $J_1= \{-k+1,\ldots,0\} \cup \{n-k+1,\ldots,n\}$.
  Employing now Lemma \ref{lem:baseineq} with $p=\infty$ to this sum, we obtain
  \begin{equation}\label{eq:firstpart}
	  |PTf_i(r(x))| \lesssim \gamma^{d(\indset(r(x)),J_1)} \|Tf_i\|_\infty \lesssim 
	  \gamma^{\td(\indset(x),\indset(\supp f_i))} \|f_i\|_\infty.
  \end{equation}

  Now we turn to the second term on the right hand side of \eqref{eq:decomp1}.
  Let $g:= (T\tP - PT) f_i$. Observe that $g\in \mathcal S_k(\Delta)$ since the range of both $T\tP$ and $P$ is
  contained in $\mathcal S_k(\Delta)$. Moreover,
  \begin{equation*}
	  \langle (T\tP - T)f_i, N_j\rangle = \langle \tP f_i - f_i, \widetilde{N}_{j+i}\rangle,\qquad j =
	  0,\ldots,n-k+1,
  \end{equation*}
  where we take the latter subindex $j+i$ to be modulo $n$. This equation is true in the given range of the
  parameter $j$, since in this case, the functions $N_j$ and $\widetilde{N}_{j + i}$ coincide on their
  supports. The fact that $\tP$ is an orthogonal projection onto the span of the functions
  $(\widetilde{N}_j)_{j=0}^{n-1}$ then implies
  \begin{equation*}
	  \langle T\tP f_i - Tf_i, N_j\rangle = \langle\tP f_i - f_i,\tN_{j+i}\rangle=0,\qquad j=0,\ldots,n-k+1.
  \end{equation*}
  Combining this with the fact
  \begin{equation*}
	  \langle PTf_i - Tf_i, N_j\rangle=0,\qquad j=-k+1,\ldots,n,
  \end{equation*}
  since $P$ is an orthogonal projection onto a spline space as well, we obtain that 
  \begin{equation*}
	  \langle g, N_j\rangle =0,\qquad j=0,\ldots n-k+1.
  \end{equation*}
  Therefore, we can expand $g$ as a B-spline sum
  \begin{equation*}
	  g = \sum_{j\in J_2}\langle g,N_j\rangle N_j^*,
  \end{equation*}
  with $J_2=\{-k+1,\ldots,-1\}\cup \{n-k+2,\ldots,n\}$. Now, we employ Lemma \ref{lem:baseineq} on the function $g$
  with the parameter $p=2$ to get for the point $y=r(x)$
  \begin{align*}
	  |g(y)| &\lesssim \gamma^{d(\indset(y),J_2)} \|g\|_2 \max_{j\in J_2} |\supp N_j|^{-1/2}.
  \end{align*}
  Since $g = (T\tP - PT) f_i$ and the operator $T\tP -PT$ has norm $\leq 2\sqrt 2$ on $L^2$, we get
  \begin{equation*}
	  |g(y)| \lesssim \gamma^{d(\indset(y),J_2)} \|f_i\|_2 |\supp f_i|^{-1/2},
  \end{equation*}
  where we also used the fact that $\supp N_j \supset [s_i,s_{i+1}]=[t_0,t_1]$ or $\supp N_j \supset
  [s_{i+n},s_{i+n+1}]=[t_n,t_{n+1}]$ for $j\in J_2$. Since  $d(\indset(y),J_2) \geq
  \widetilde{d}(\indset(x),\indset(\supp f_i))$ and $\|f_i\|_2 \leq \|f_i\|_\infty |\supp f_i|^{1/2}$, we finally get
  \begin{equation*}
	  |g(y)| \lesssim \gamma^{\widetilde{d}(\indset(x),\indset(\supp f_i))} \|f_i\|_\infty.
  \end{equation*}
  Looking at \eqref{eq:decomp1} and combining the latter estimate with \eqref{eq:firstpart}, the proof is
  completed.
\end{proof}
This lemma can be used directly to prove Theorem \ref{th:boundedperiodic} on the uniform boundedness of periodic orthogonal spline projection
operators on $L^\infty$:
\begin{proof}[Proof of Theorem \ref{th:boundedperiodic}]
	We just decompose the function $f$ as $f = \sum_{i=0}^{n-1} f \cdot \mathbb 1_{[s_i,s_{i+1})}$ and apply 
		Lemma~\ref{lem:permain} to each summand and the assertion $\|\tP f\|_\infty \lesssim \|f\|_\infty$
		follows after summation of a geometric series.
\end{proof}

\begin{remark}\label{rem:adjoint}
	(i) Since $\tP$ is a selfadjoint operator, Theorem \ref{th:boundedperiodic}
	also implies that $\tP$ is bounded as an operator
	from $L^1(\mathbb T)$ to $L^1(\mathbb T)$ by the same constant $c_k$ as in the above theorem. Moreover, by
	interpolation, $\tP$ is also bounded by $c_k$ as an operator from $L^p(\mathbb T)$ 
	to $L^p(\mathbb T)$ for any $p\in [1,\infty]$.

	(ii) In the proof of Lemma~\ref{lem:permain}, we only use the second inequality of 
	Lemma~\ref{lem:baseineq} which follows from inequality \eqref{eq:decayineq} 
	on the inverse of the B-spline Gram matrix and does not need its stronger form \eqref{eq:improvedversion}.
	Similarly to the equivalence of Shadrin's theorem and \eqref{eq:decayineq} in the non-periodic case, we can
	derive the equivalence of Theorem \ref{th:boundedperiodic} and the estimate
	\begin{equation*}
		|\widetilde{a}_{ij}| \leq \frac{K\gamma^{\widetilde{d}(i,j)}}{\max(\widetilde{\kappa}_i
		,\widetilde{\kappa}_j)}, \qquad 0\leq i,j\leq n-1,
	\end{equation*}
	where $(\widetilde{a}_{ij})$ denotes the inverse of the Gram matrix $(\langle \tN_i,\tN_j\rangle)$,
	$K>0$ and $\gamma\in(0,1)$ are constants only depending on the spline order $k$,
	$\widetilde{\kappa}_i$ denotes the length of the support of $\tN_i$ and $\widetilde{d}$ is the canonical
	distance in $\bZ/n\bZ$. The proof of this equivalence uses the same tools as the proof in the non-periodic
	case: a periodic version of both Demko's theorem and de Boor's stability.
\end{remark}

\section{Almost everywhere convergence}\label{sec:aeperiodic}
In this section we prove Theorem \ref{thm:aeperiodic} on the a.e. convergence of periodic spline projections.
\begin{proof}[Proof of Theorem \ref{thm:aeperiodic}]
	Without loss of
	generality, we assume that $\widetilde\Delta_n$ has $n$ points. Let
	$\widetilde\Delta_n= (s_j^{(n)})_{j=0}^{n-1}$ 
	 and $(\tN_j^{(n)})_{j=0}^{n-1}$ be the corresponding periodic B-spline functions.
	 Associated to it, define  the non-periodic point 
	 sequence $\Delta_n=(t_j^{(n)})_{j=-m}^{n+k-1}$ with the boundary
	points $0$ and $1$ as
	\begin{align*}
		t_{j}^{(n)} &= s_{j}^{(n)},\qquad j=0,\ldots,n-1, \\
		t_{-m}^{(n)} &= \cdots = t_{-1}^{(n)} = 0, \qquad t_{n}^{(n)} = \cdots = t_{n+k-1}^{(n)} = 1.
	\end{align*}
	We choose the integer $m$ such that the multiplicity of the point $0$ in $\Delta_n$ is $k$
	and denote by $(N_j^{(n)})_{j=-m}^{n-1}$ the non-periodic B-spline functions corresponding to
	this point sequence and by $P_n$ the orthogonal projection operator onto the span of
	$(N_j^{(n)})_{j=-m}^{n-1}$.
	
	We will show that $\tP_n f(x)\to f(x)$ for all $x$ in the set $A$ from Theorem \ref{thm:aenonperiodic} of
	full Lebesgue measure such that $\lim P_n T f(x)=Tf(x)$ for all $x\in A$, where $T$ is just 
	the operator that canonically
	identifies a function defined on $\mathbb T$ with the corresponding function defined on $[0,1)$ and we write
		$x$ for a point in $\mathbb T$ as well as for its representative in the interval $[0,1)$.

	So, choose an arbitrary (non-zero) point 
	$x\in A$ 
	and  decompose $\tP_nf(x)$:
	\begin{equation}
		\label{eq:decomp12}
		\tP_n f(x) = T\tP_nf(x) = P_nTf(x) + \big(T\tP_n f(x) - P_n Tf(x)\big).
	\end{equation}
	
			For the first term of \eqref{eq:decomp12}, $P_nTf(x)$, we have that $\lim_{n\to\infty}
			P_nTf(x) = Tf(x) = f(x)$ since $x\in A$.

		It remains to estimate the second term $g_n(x) =
		T\tP_nf(x) - P_n T f(x) = T\tP_n f(x) - Tf(x) + Tf(x)- P_n Tf(x)$ of \eqref{eq:decomp12}.
		First, note that $g_n\in
			\mathcal S_k(\Delta_n)$. Moreover,
			\begin{equation*}
				\langle T\tP_n f - Tf, N_j^{(n)}\rangle = \langle \tP_n f-f,\tN_j^{(n)}\rangle
				=0,\qquad j=0,\ldots,n-k-1,
			\end{equation*}
			since $\tP_n$ is the projection operator onto the span of the B-spline functions
			$(\widetilde N_j^{(n)})$, and 
			\begin{equation*}
				\langle Tf - P_nT f, N_j^{(n)}\rangle = 0,\qquad j=-m,\ldots,n-1,
			\end{equation*}
			since $P_n$ is the projection operator onto the span of the functions $(N_j^{(n)})$.
			Therefore, $g_n\in \mathcal S_k(\Delta_n)$ can be written as 
			\begin{equation*}
				g_n = \sum_{j\in J_n} \langle g_n,N_j^{(n)}\rangle N_j^{(n)*},
			\end{equation*}
			with $J_n = \{-m, \ldots, -1\} \cup \{n-k,\ldots,n-1\}$ and $(N_j^{(n)*})$ being the dual
			basis to $(N_j^{(n)})$. We now apply Lemma \ref{lem:baseineq} with $p=1$ to $g_n$ and get
			\begin{equation*}
				|g_n(x)| \lesssim \gamma^{d(\indset_n(x),J_n)}\|g_n\|_1 
				\max_{\ell\in\indset_n(x),j\in
				J_n} \frac{1}{h_{\ell j}^{(n)}},
			\end{equation*}
			where $h_{\ell j}^{(n)}$ denotes the length of the convex hull of $\supp N_\ell^{(n)}\cup
			\supp N_j^{(n)}$ and $\indset_n(x)$ is the set of indices $i$ such that $x$ is in the
			support of $N_i^{(n)}$. Since for $\ell\in \indset_n(x)$, 
			the point $x$ is contained in $\supp
			N_\ell^{(n)}$  and for $j\in J_n$ either the point $0$ or the
			point $1$ is contained in $\supp N_j^{(n)}$, we can further estimate
			\begin{equation*}
				|g_n(x)| \lesssim \gamma^{d(\indset_n(x),J_n)} \|g_n\|_1 \frac{1}{\min(x,1-x)}.
			\end{equation*}
			Now, $\|g_n\|_1 = \|(T\tP_n-P_nT)f\|_1 \lesssim \|f\|_1$, since the operator $T$ has norm
			one on $L^1$ and $\tP_n$ and $P_n$ are both bounded on $L^1$ uniformly in $n$ by Theorem
			\ref{th:boundedperiodic} (cf. Remark \ref{rem:adjoint}) and Theorem \ref{thm:shadrin},
			respectively. Since $|\widetilde \Delta_n|$ tends to zero, and a fortiori 
			the same is true for $|\Delta_n|$, we have that $d(\indset_n(x),J_n)$ tends to infinity 
			as $n\to\infty$. This implies  $\lim_{n\to\infty} g_n(x) = 0$, and therefore, by
			the choice of the point~$x$ and decomposition~\eqref{eq:decomp12}, 
			$\lim \tP_n f(x) = f(x)$. Since $x \in A$ was arbitrary and
			$A$ is a set of full Lebesgue measure, we obtain
			\begin{equation*}
				\lim_{n\to\infty} \tP_n f(y) = 0,\qquad \text{for a.e. }y\in\mathbb T,
			\end{equation*}
			and the proof is completed.
\end{proof}

\section{The case of infinite point sequences}\label{sec:doubly}
In this last section, we use the methods introduced in the previous sections to recover Shadrin's theorem for
infinite point sequences (see \cite{Shadrin2001,deBoor2012}).

Let $(s_i)_{i\in\mathbb Z}$ be a biinfinite point sequence in $\mathbb R$ satisfying  
\begin{align*}
	s_i &\leq s_{i+1}, \qquad s_i<s_{i+k},
\end{align*}
with the corresponding B-spline
functions $(\tN_i)_{i\in\mathbb Z}$ satisfying $\supp \tN_i = [s_i,s_{i+k}]$. 
Furthermore, we denote by $\tP$ the orthogonal projection
operator onto the closed linear span
of the functions $(\tN_i)_{i\in \mathbb Z}$. 
\begin{lemma}\label{lem:biinfinite}
	Let $f$ be a function on $(\inf s_i,\sup s_i)$ with compact support. Then, for any 
	$x\in (\inf s_i, \sup s_i)$,
	\begin{equation*}
		|\tP f(x)| \lesssim \gamma^{d(\indset(x),\indset(\supp f))} \|f\|_\infty,
	\end{equation*}
	where $\gamma\in(0,1)$ is the constant appearing in inequality \eqref{eq:improvedversion}.
\end{lemma}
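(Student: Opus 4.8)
The plan is to imitate the structure of the proof of Lemma~\ref{lem:permain}, replacing the periodic-to-interval comparison by a biinfinite-to-interval comparison. First I would reduce to the case where $f$ is supported in a single knot interval $[s_i,s_{i+1}]$ by the usual decomposition $f=\sum_i f\cdot\charfun_{[s_i,s_{i+1})}$ and summation of a geometric series; so assume $\supp f\subset[s_i,s_{i+1}]$ with $s_i<s_{i+1}$. Since $\supp f$ is compact and contained in $(\inf s_i,\sup s_i)$, there is a large index interval $\{-L,\dots,L\}$ of knots comfortably surrounding $\supp f$; I would then introduce the finite knot sequence $\Delta$ obtained by keeping $t_j=s_j$ for $-L\leq j\leq L$ and collapsing the endpoints to multiplicity $k$ at $s_{-L}$ and $s_{L}$, with $P$ the orthogonal projector onto $\cS_k(\Delta)$ and $(N_j)$ the associated B-spline basis. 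The point of the argument is that on a fixed compact set the biinfinite projector $\tP$ and the finite projector $P$ nearly agree, with an error that decays geometrically in the distance to the artificial boundary at $\pm L$; letting $L\to\infty$ then kills the boundary term entirely and leaves only the genuinely local estimate.

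Concretely, I would decompose, for $x\in(\inf s_i,\sup s_i)$ fixed and $L$ chosen large enough that $x$ and $\supp f$ lie well inside $[s_{-L},s_L]$,
\begin{equation*}
	\tP f(x) = Pf(x) + \bigl(\tP f - Pf\bigr)(x).
\end{equation*}
For the first term, exactly as in \eqref{eq:firstpart}, $\supp f\subset[s_i,s_{i+1}]$ forces $Pf=\sum_{j\in J_1}\langle f,N_j\rangle N_j^*$ with $J_1=\indset(\supp f)$ a bounded block of indices, and Lemma~\ref{lem:baseineq} with $p=\infty$ gives $|Pf(x)|\lesssim\gamma^{d(\indset(x),\indset(\supp f))}\|f\|_\infty$, uniformly in $L$. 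For the second term, set $g:=(\tP - P)f\in\cS_k(\Delta)$; arguing as before, for every B-spline $N_j$ whose support avoids the two endpoint clusters the functions $N_j$ and $\tN_j$ agree on their common support, and both $\langle \tP f-f,\tN_j\rangle=0$ (since $\tP$ is the orthoprojector onto $\overline{\lin}(\tN_j)$) and $\langle Pf-f,N_j\rangle=0$, so $\langle g,N_j\rangle=0$ for all such $j$. Hence $g=\sum_{j\in J_2}\langle g,N_j\rangle N_j^*$ where $J_2$ consists only of the $O(k)$ indices adjacent to $s_{-L}$ and $s_L$. Applying Lemma~\ref{lem:baseineq} with $p=2$, using $\|g\|_2\leq\|(\tP-P)f\|_2\lesssim\|f\|_2\leq\|f\|_\infty|\supp f|^{1/2}$ (both $\tP$ and $P$ are contractions on $L^2$, being orthogonal projections), and noting that every $N_j$ with $j\in J_2$ has support containing the whole block between $\supp f$ and one of the endpoints, I get $|g(x)|\lesssim\gamma^{d(\indset(x),J_2)}\|f\|_\infty$, and $d(\indset(x),J_2)\to\infty$ as $L\to\infty$ because both $x$ and $\supp f$ stay fixed while the endpoint blocks recede. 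Letting $L\to\infty$ therefore sends $g(x)\to 0$, while $\tP f(x)$ and the first term do not depend on $L$, so $\tP f(x)=\lim_{L\to\infty}Pf(x)$ and the bound $|\tP f(x)|\lesssim\gamma^{d(\indset(x),\indset(\supp f))}\|f\|_\infty$ survives in the limit.

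The one point that needs a little care — and which I regard as the main (mild) obstacle — is the passage to the limit: I must check that $Pf(x)$ genuinely stabilizes, i.e. that the finite-knot projections $P=P_L$ converge pointwise (on the fixed compact set, away from the boundary) to $\tP f$ as $L\to\infty$. This is exactly what the decomposition delivers, since $\tP f(x)-Pf(x)=g(x)\to 0$; one just has to be sure the implied constants in Lemma~\ref{lem:baseineq} are the knot-independent ones (they are, depending only on $k$ via $\gamma$, $K$), so that no constant blows up with $L$. Everything else is the same bookkeeping with B-spline supports, the partition-of-unity bound, and summing a geometric series that already appears in the proofs of Lemmas~\ref{lem:baseineq} and~\ref{lem:permain}.
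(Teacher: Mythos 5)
Your proposal follows essentially the same route as the paper: compare $\tP$ with the orthoprojector $P$ onto a finite spline space obtained by truncating the knot sequence and collapsing the two artificial endpoints to multiplicity $k$, bound the local part $Pf$ by Lemma~\ref{lem:baseineq} with $p=\infty$, and observe that the discrepancy $g=(\tP-P)f$ is a spline whose B-spline coefficients vanish except at the $O(k)$ boundary indices, so that Lemma~\ref{lem:baseineq} with $p=2$ applies. The only structural difference is that the paper fixes the truncation indices $\ell,r$ once and for all, chosen so large that the boundary term is already dominated by $\gamma^{d(\indset(x),\indset(\supp f))}\|f\|_\infty$, whereas you let the truncation parameter $L$ tend to infinity and kill the boundary term in the limit; these are equivalent, and your preliminary reduction to $f$ supported in a single knot interval is harmless but not needed. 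One intermediate claim is wrong, though: the boundary B-splines $N_j$, $j\in J_2$, have supports spanning at most $k$ knot intervals next to $s_{\pm L}$, so they do not contain ``the whole block between $\supp f$ and one of the endpoints,'' and in particular $|\supp N_j|^{-1/2}$ cannot be traded against $|\supp f|^{1/2}$ the way it is in Lemma~\ref{lem:permain}, where $\supp f_i\subset[t_0,t_1]$ really is an endpoint interval of the comparison grid. The repair is immediate and is exactly what the paper does: use the third estimate of Lemma~\ref{lem:baseineq}, namely $|g(x)|\lesssim\gamma^{d(\indset(x),J_2)}\|g\|_2\,|I(x)|^{-1/2}$, together with $\|g\|_2\lesssim\|f\|_2\leq\|f\|_\infty|\supp f|^{1/2}$; the resulting constant $|\supp f|^{1/2}|I(x)|^{-1/2}$ is independent of $L$, which is all your limiting argument requires.
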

\begin{proof}
	For notational simplicity, we assume in this proof that the sequence $(s_i)$ is strictly increasing.
	Let $x\in (\inf s_i, \sup s_i)$ and let $I(x)$ be the interval $I = [s_i,s_{i+1})$ containing $x$.
		Since $f$ has compact support and the sequence $(s_i)$ is biinfinite, we can choose the
		indices $\ell$ and $r$ such that $\{x\}\cup\supp f\subset [s_\ell,s_{r+1})$ and 
			with $J=\{\ell-k+1,\ldots,\ell-1\}\cup \{r-k+2,\ldots,r\}$, the inequality
		\begin{equation*}
			\gamma^{d(\indset(x),J)}|\supp f|^{1/2} |I(x)|^{-1/2} \leq \gamma^{d(\indset(x),i(\supp f))}
		\end{equation*}
		is true.

Next, define the point sequence $\Delta=(t_i)_{i=\ell-k+1}^{r+k}$ by 
\begin{align*}
	t_i &= s_i,\qquad i=\ell,\ldots,r+1,\\
	a = t_{\ell-k+1} &= \cdots = t_\ell=s_\ell,\qquad b= t_{r+k} = \cdots = t_{r+1}=s_{r+1},
\end{align*}
and let the collection $(N_i)_{i=\ell-k+1}^r$ be the corresponding B-spline functions and $P$ the
associated orthogonal projector. Let $T$ be the operator that
restricts a function defined on $(\inf s_i,\sup s_i)$ to the interval $[a,b]$. In order to estimate $\tP f(x)$,
we decompose
\begin{equation}\label{eq:decompbiinfinite}
	\tP f(x) = T\tP f(x) = P T f(x) + \big(T\tP f(x) - P T f(x)\big).
\end{equation}
Observe that $P T f = \sum_{n\in F}\langle f,N_n\rangle N_n^*$, where $F=\indset(\supp f)=\{i : \supp f\cap 
\supp N_i\neq \emptyset\}$. Applying Lemma \ref{lem:baseineq} with the exponent $p=\infty$, we obtain
\begin{equation*}
	|PT f(x)| \lesssim  \gamma^{d\left(\indset(x),F\right)}\|f\|_\infty.
\end{equation*}

We now consider the second part of the decomposition \eqref{eq:decompbiinfinite}, the function $g = (T\tP - P T)f = (T\tP - T + T - P T)f$. We observe that $g\in \mathcal S_k(\Delta)$ and, moreover,
\begin{align*}
	\langle T\tP f - Tf , N_j\rangle = \langle \tP f - f, \tN_j\rangle = 0, \qquad j= \ell,\ldots, r-k+1,
\end{align*}
by definition of the projection operator $\tP$, and,
\begin{equation*}
	\langle T f - P T f, N_j\rangle = 0, \qquad j= \ell -k +1,\ldots, r, \end{equation*}
by definition of the projection operator $P$.
Therefore, we can write the function $g$ as 
\begin{equation*}
	g = \sum_{j\in J} \langle g,N_j\rangle N_j^*
\end{equation*}
with $J= \{\ell - k+1 ,\ldots, \ell-1\} \cup \{r-k+2,\ldots, r\}$ as defined above.
Now, by Lemma \ref{lem:baseineq} with the exponent $p=2$, we get 
\begin{align*}
	|g(x)| &\lesssim \gamma^{d(\indset(x),J)}\|g\|_2 \cdot |I(x)|^{-1/2} \lesssim 
	\gamma^{d(\indset(x),J)} \|f\|_2
	\cdot |I(x)|^{-1/2} \\
	&\leq  \gamma^{d(\indset(x),J)}|\supp f|^{1/2}|I(x)|^{-1/2} \|f\|_\infty.
\end{align*}
Finally, due to the choice of $\ell$ and $r$, 
\begin{equation*}
	\gamma^{d(\indset(x),J)}|\supp f|^{1/2} |I(x)|^{-1/2} \leq \gamma^{d(\indset(x),\indset(\supp f))}, 
\end{equation*}
which proves the lemma.
\end{proof}
We can now use this lemma to define $\tP f$ for functions $f\in L^\infty(\inf s_i,\sup s_i)$ that are not
necessarily in $L^2(\inf s_i,\sup s_i)$ if $\inf s_i=-\infty$ or $\sup s_i=+\infty$. If we let $f_i :=
f\charfun_{[s_i,s_{i+1})}$, then $f_i$ has compact support and the above lemma implies that the series
	\begin{equation*}
		\tP f(x) := \sum_{i\in\mathbb Z} \tP f_i(x),\qquad x\in (\inf s_i, \sup s_i),
	\end{equation*}
	is absolutely convergent and, moreover, there exists a constant $C$ only depending on the spline order $k$
	such that 
	\begin{equation*}
		\|\tP f\|_\infty \leq C \|f\|_\infty.
	\end{equation*}
This operator enjoys the characteristic property of an orthogonal projection:
\begin{equation*}
	\langle \widetilde P f - f, \tN_i\rangle =0,\qquad i\in\mathbb Z.
\end{equation*}
\begin{remark}
	One can combine the proofs of Lemma~\ref{lem:biinfinite} and Lemma~\ref{lem:permain} to also obtain the
	uniform boundedness of the spline orthoprojector on $L^\infty$ for one-sided infinite point sequences.
\end{remark}
\bibliography{persplines}
\bibliographystyle{plain}
\end{document}